\newcommand{\R}{\mathbb R}
\newcommand{\E}{\mathbb E}
\newcommand{\e}{\mathrm e}
\renewcommand{\phi}{\varphi}
\renewcommand{\hat}{\widehat}
\newtheorem{theorem}{Theorem}
\newtheorem{lemma}[theorem]{Lemma}
\theoremstyle{remark}
\theoremstyle{definition}
\title{On the stochastic proof
of the Blaschke-Santal\'o inequality}
\author{Joseph Lehec} 
\begin{document}

\maketitle

\begin{abstract}
In 2024, Courtade, Fathi and Mikulincer gave a proof 
of the symmetrized Talagrand inequality based 
on stochastic calculus, in the spirit of Borell's proof 
of the Prékopa-Leindler inequality. The symmetrized 
Talagrand inequality can be seen as a dual form of the 
functional Santal\'o inequality. The modest 
purpose of this note is to give a simplified version 
of the Courtade, Fathi and Mikulincer argument. Namely 
we first recall briefly Borell's original argument,
and we then explain a simple twist in his proof that allows 
to recover the functional Santal\'o inequality 
directly, rather than in its dual form. 
\end{abstract}

\section{Introduction} 
Given a convex body $K$ in $\R^n$, namely a compact convex set with non empty interior, we let $K^\circ = \{ y\in \R^n \colon x\cdot y\leq 1 ,\; \forall x\in K\}$ be its polar body. The volume product of $K$ is the quantity $\vert K \vert \cdot \vert K^\circ \vert$ where $\vert \cdot \vert$ denotes the Lebesgue measure. 
The Blaschke-Santal\'o inequality asserts the among convex bodies having their barycenter at $0$, the volume product is maximal when $K$ is the Euclidean ball. This was proved by Blaschke in dimension $3$ and Santal\'o~\cite{santalo} in the general case, 
using variational methods. More recent proofs~\cite{S-R,MP} rely on symmetrization arguments. In this paper we shall discuss the following functional 
version of the Blaschke-Santal\'o inequality. 
\begin{theorem}[Functional Santal\'o inequality] \label{thm_sant}
Let $f,g$ be non-negative measurable functions on $\R^n$
satisfying
\begin{equation} \label{eq_sant} 
f(x)g(y)\leq \e^{-x\cdot y} , \quad \forall x,y\in\R^n .
\end{equation}
If $f$ (or $g$) has its barycenter at $0$, namely if $\int_{\R^n} x f(x) \, dx = 0$,
then 
\[ 
\int_{\R^n} f(x) \, dx \int_{\R^n} g(y) \, dy \leq  (2\pi)^n . 
\]   
\end{theorem}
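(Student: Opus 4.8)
\emph{Approach.} The plan is to follow Borell's stochastic scheme for the Pr\'ekopa--Leindler inequality, with a twist in how the hypothesis \eqref{eq_sant} enters. First, reductions. Put $\phi=-\log f$ and $\psi=-\log g$ (no convexity is assumed). Taking logarithms, \eqref{eq_sant} says $\phi(x)+\psi(y)\ge x\cdot y$ for all $x,y$, i.e. $\psi\ge\phi^*$ where $\phi^*$ is the Legendre transform; since then $\int g\,dy\le\int\e^{-\phi^*}\,dy$, it suffices to prove $\int\e^{-\phi}\,dx\cdot\int\e^{-\phi^*}\,dy\le(2\pi)^n$. Transfer this to Gaussian space: if $(W_t)_{0\le t\le1}$ is a standard Brownian motion in $\R^n$, so that $W_1$ is a standard Gaussian vector, then $\int\e^{-\phi}\,dx=(2\pi)^{n/2}\,\E\,\e^{-\Phi(W_1)}$ and $\int\e^{-\phi^*}\,dy=(2\pi)^{n/2}\,\E\,\e^{-\Psi(W_1)}$ with $\Phi=\phi-\tfrac12|\cdot|^2$ and $\Psi=\phi^*-\tfrac12|\cdot|^2$. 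The hypothesis becomes the pointwise bound
\[
\Phi(x)+\Psi(y)\ \ge\ -\tfrac12\,|x-y|^2\qquad(x,y\in\R^n),
\]
the barycenter assumption says precisely that the probability density proportional to $\e^{-\phi}$ is centered, and the goal is now $\E\,\e^{-\Phi(W_1)}\cdot\E\,\e^{-\Psi(W_1)}\le1$.

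\emph{The stochastic argument.} Let $\mu$ and $\nu$ be the probability measures with densities proportional to $\e^{-\phi}$ and to $\e^{-\phi^*}$, and let $\gamma$ be the standard Gaussian measure (the law of $W_1$). A direct computation gives $\log\E\,\e^{-\Phi(W_1)}=-H(\mu\,|\,\gamma)-\int\Phi\,d\mu$, and similarly with $\Psi$ and $\nu$, where $H(\,\cdot\,|\,\gamma)$ is relative entropy. Moreover, by the stochastic representation of the entropy (Borell, F\"ollmer), $H(\mu\,|\,\gamma)=\tfrac12\,\E\int_0^1|u_s|^2\,ds$ where $u$ is the F\"ollmer drift of $\mu$, giving a process $X_t=W_t+\int_0^t u_s\,ds$ with $X_1\sim\mu$; in particular $\E\,X_1=0$ by the centering. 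Consequently $\E\,\e^{-\Phi(W_1)}\cdot\E\,\e^{-\Psi(W_1)}\le1$ is equivalent to
\[
H(\mu\,|\,\gamma)+H(\nu\,|\,\gamma)+\int\Phi\,d\mu+\int\Psi\,d\nu\ \ge\ 0 .
\]
Since $\int\Phi\,d\mu+\int\Psi\,d\nu=\int\!\!\int\bigl(\Phi(x)+\Psi(y)\bigr)\,d\pi(x,y)$ for \emph{every} coupling $\pi$ of $\mu$ and $\nu$, the pointwise bound above yields $\int\Phi\,d\mu+\int\Psi\,d\nu\ge-\tfrac12\,W_2^2(\mu,\nu)$, so it is enough to prove the transport--entropy inequality
\[
H(\mu\,|\,\gamma)+H(\nu\,|\,\gamma)\ \ge\ \tfrac12\,W_2^2(\mu,\nu),
\]
which is (equivalent to) the symmetrized Talagrand inequality; one proves it by running the F\"ollmer processes of $\mu$ and of $\nu$ on a \emph{common} Brownian motion and comparing their drifts, and this is where the real work of the Courtade--Fathi--Mikulincer argument lies. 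The twist needed to reach Santal\'o directly, rather than through this dual statement, is to perform the analogous coupling at the level of the product $\int\e^{-\phi}\,dx\cdot\int\e^{-\phi^*}\,dy$ itself.

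\emph{The main obstacle.} Everything hinges on the comparison of the two F\"ollmer drifts $u$ (for $\mu$) and $v$ (for $\nu$). A crude estimate such as $W_2^2(\mu,\nu)\le\E\,|X_1-Y_1|^2\le\E\int_0^1|u_s-v_s|^2\,ds$ is far too lossy to close the inequality --- already on centered Gaussians $\phi(x)=\tfrac12\langle Ax,x\rangle$, where Santal\'o holds with equality, the two F\"ollmer drifts carried by a common Brownian motion are not positively correlated. The point is that the pointwise inequality $\Phi(x)+\Psi(y)\ge-\tfrac12|x-y|^2$ is saturated exactly when $y\in\partial\phi(x)$, so one must couple the two processes so that $Y_1$ tracks a Legendre image of $X_1$ while still keeping the Cameron--Martin energy $\tfrac12\,\E\int_0^1|v_s|^2\,ds$ under control; it is precisely here that the barycenter hypothesis on $f$ enters, to cancel the cross-term arising when $|X_1-Y_1|^2$ is expanded. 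Carrying this out carefully is the heart of the argument; the passages between the Lebesgue, Gaussian and path-space pictures, and the integrability and approximation needed to justify the representation formulas, are routine.
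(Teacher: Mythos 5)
Your reductions are correct and faithful: the passage from \eqref{eq_sant} to the Gaussian property-$(\tau)$ form with constant $1/2$ and the centering condition, and the translation of all three quantities into Borell/F\"ollmer language, match what the paper does (modulo a sign convention). You also correctly pin down where the difficulty is: coupling the two drifts on a common Brownian motion gives
$\E\int_0^1|u_s-v_s|^2\,ds=\E\int_0^1|u_s|^2\,ds+\E\int_0^1|v_s|^2\,ds-2\,\E\int_0^1\langle u_s,v_s\rangle\,ds$,
and there is no reason for the cross-term to have a favorable sign.

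What is missing is the single trick that makes the paper's argument work, and without it the proof does not close. The paper does not run $u$ and $v$ against the same Brownian motion. It keeps $u$ as the optimal drift for $\phi$ with respect to $(B_t)$, but takes the drift for $\psi$ with respect to the \emph{time-reversed} Brownian motion $\hat B_t=B_1-B_{1-t}$. This achieves two things simultaneously: first $\hat B_1=B_1$, so the two terminal points coincide and the pointwise hypothesis still gives the bound by $\tfrac12\bigl|\int_0^1 u_t\,dt-\int_0^1\hat v_t\,dt\bigr|^2$; second, after reversing time in the second integral and writing $\hat v_{1-t}$, the random variables $u_t$ and $\hat v_{1-t}$ are measurable with respect to $\mathcal F_t=\sigma(B_s,s\le t)$ and $\hat{\mathcal F}_{1-t}=\sigma(B_1-B_s,s\ge t)$ respectively, and these $\sigma$-fields are \emph{independent}. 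Hence $\E\langle u_t,\hat v_{1-t}\rangle=\langle\E u_t,\E\hat v_{1-t}\rangle$, and this is where the barycenter hypothesis is used: by the characterization of the optimal drift (Lemma~\ref{lem_opt}), $\E u_t$ equals the barycenter of $\e^{\phi}\,d\gamma_n$, which is $0$. The cross-term therefore vanishes identically, not just to leading order, and the inequality follows after taking expectation, the supremum over $\hat v$, and invoking Borell's formula twice.

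Your suggested direction --- forcing $Y_1$ to ``track a Legendre image'' of $X_1$ --- is not what is needed and would not be easy to implement; the saturation of $\Phi(x)+\Psi(y)\ge-\tfrac12|x-y|^2$ at $y\in\partial\phi(x)$ is a red herring for this argument. Also note that once you pass to the optimal coupling and reduce to the symmetrized Talagrand inequality $H(\mu|\gamma)+H(\nu|\gamma)\ge\tfrac12 W_2^2(\mu,\nu)$, you have committed to the Courtade--Fathi--Mikulincer dual route, which is precisely what the paper set out to sidestep: working directly at the level of the product of integrals, with an arbitrary (not necessarily F\"ollmer) drift $\hat v$ and Borell's formula, avoids ever mentioning $W_2$. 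Either route can be made to work, but both hinge on the time-reversal + independence + centering mechanism that your write-up does not locate.
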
 
Applying this to $f = \exp ( - \Vert \cdot \Vert^2_K /2) $ where $\Vert \cdot \Vert_K$ 
is the gauge function of $K$ recovers the geometric version.  
The functional inequality was first proved by Ball~\cite{ball} under the additional assumption that $f$ is even. Theorem~\ref{thm_sant} is essentially due to Artstein, Klartag and Milman~\cite{AKM}, with the caveat that in their paper the function having its barycenter at $0$ should be log-concave. These works relied on the geometric version of 
the inequality, by applying to level sets of the function $f$. The first direct proof, based on some induction on the dimension, is due the author in~\cite{lehecCRAS}. See also~\cite{lehecADM} where a more general inequality, originally due to Fradelizi and Meyer~\cite{FM}, is established. 

Recently, Nakamura and Tsuji~\cite{NT} gave a new proof of the 
functional Santal\'o inequality, based on a semigroup argument. 
Namely, they proved that when $f$ is even
the expression $\int_{\R^n} f^\circ(y) \, dy$ is monotone increasing along 
the Fokker-Planck semigroup. Here $f^\circ$ denotes the polar function of $f$, 
namely the largest function $g$ such that~\eqref{eq_sant} is satisfied. This 
recovers Santal\'o (for even functions) by sending time to $+\infty$. 
See also~\cite{CGNT}, in which the argument is somewhat simplified.  

In 2024, Courtade, Fathi and Mikulincer~\cite{CFM} found a
proof based on stochastic calculus 
of a dual version of the functional Santal\'o inequality
related to Talagrand's tranport inequality. 
Stochastic proofs of functional inequalities 
seem to have started with Borell's seminal paper~\cite{borell} on the 
Pr\'ekopa-Leindler inequality, which is a functional counterpart of 
the Brunn-Minkowski inequality. 
The modest purpose of this note is to give a slightly 
simplified version of the Courtade, Fathi, and Mikulincer
argument in which the connection with Borell's work is more 
apparent.
\section{Borell's formula} 
Throughout this section we are given a standard Brownian 
motion on $\R^n$, denoted $(B_t)_{t\in[0,1]}$ (all our processes 
will be indexed by the finite time interval $[0,1]$). 
In this context we call \emph{drift} 
a progressively measurable process $(u_t)$ such that 
\[ 
\E \int_0^1 \vert u_t\vert^2 \,dt < + \infty. 
\] 
The standard Gaussian measure is denoted $\gamma_n$. 
Borell's proof of the Pr\'ekopa-Leindler 
inequality relies on the following representation formula. 
\begin{lemma}[Borell's formula]
Let $\phi \colon \R^n \to \R$ be measurable and bounded from above. 
Then 
\[ 
\log \left( \int_{\R^n} \e^\phi \, d\gamma_n \right) 
= \sup \left\{ \E \left[\phi \left( B_1 + \int_0^1  u_t \, dt \right) 
 -  \frac12 \int_0^1 \vert u_t\vert^2 \, dt \right] \right\} 
 \] 
where the supremum is taken over all drifts $(u_t)$. 
\end{lemma}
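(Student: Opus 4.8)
The identity is the Boué--Dupuis variational formula, and the argument I would give is in the spirit of Borell's original proof. The plan is to establish the two inequalities separately, in both directions by applying Itô's formula to the value function attached to $\phi$. First I would reduce, by a routine mollification-and-truncation of $\phi$ together with dominated convergence on both sides, to the case where $\phi$ is smooth with bounded gradient. Then set
\[
F(t,x):=\log\E\bigl[e^{\phi(B_1)}\mid B_t=x\bigr]=\log\int_{\R^n}e^{\phi(x+\sqrt{1-t}\,y)}\,\gamma_n(dy),
\]
so that $F(1,\cdot)=\phi$ and $F(0,0)=\log\int e^\phi\,d\gamma_n$. Writing $F=\log H$ with $H=e^F$ solving the backward heat equation $\partial_tH+\tfrac12\Delta H=0$, one checks that $F$ is smooth, that $\|\nabla F\|_\infty\le\|\nabla\phi\|_\infty$, and that $F$ solves the Hamilton--Jacobi--Bellman equation $\partial_tF+\tfrac12\Delta F+\tfrac12|\nabla F|^2=0$.

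For the inequality ``$\le$'', let $(u_t)$ be an arbitrary drift and $X_t=B_t+\int_0^t u_s\,ds$. Itô's formula for $F(t,X_t)$, combined with the HJB equation to rewrite $\partial_tF+\tfrac12\Delta F$ as $-\tfrac12|\nabla F|^2$, shows that the finite-variation part of $dF(t,X_t)$ equals $(u_t\cdot\nabla F-\tfrac12|\nabla F|^2)\,dt=(\tfrac12|u_t|^2-\tfrac12|u_t-\nabla F|^2)\,dt\le\tfrac12|u_t|^2\,dt$. Since $\nabla F$ is bounded the martingale part has zero mean, so integrating on $[0,1]$ and taking expectations yields $\E[\phi(X_1)]-\log\int e^\phi\,d\gamma_n\le\tfrac12\E\int_0^1|u_t|^2\,dt$; taking the supremum over $u$ gives one half of the lemma. (Alternatively: by Girsanov's theorem $\int e^\phi\,d\gamma_n=\E[\exp(\phi(X_1)-\int_0^1u_t\cdot dB_t-\tfrac12\int_0^1|u_t|^2\,dt)]$, and Jensen's inequality finishes.)

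For the reverse inequality I would exhibit the maximising drift. Let $(W_t)$ solve $dW_t=\nabla F(t,W_t)\,dt+dB_t$, $W_0=0$ --- a strong solution exists because $\nabla F$ is bounded and locally Lipschitz on $[0,1]\times\R^n$ --- and set $u^\ast_t:=\nabla F(t,W_t)$, which is a drift adapted to $(B_t)$ with $W_1=B_1+\int_0^1u^\ast_t\,dt$. Running the same computation along $W$, the cross term is now exactly $|\nabla F|^2$, so the finite-variation part of $dF(t,W_t)$ is $\tfrac12|u^\ast_t|^2\,dt$; integrating and taking expectations gives $\E[\phi(W_1)]-\log\int e^\phi\,d\gamma_n=\tfrac12\E\int_0^1|u^\ast_t|^2\,dt$, i.e.\ $u^\ast$ realises the value $\log\int e^\phi\,d\gamma_n$ in the supremum.

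In essence the lemma is the elementary inequality $a\cdot b-\tfrac12|b|^2\le\tfrac12|a|^2$, with equality iff $a=b$, passed through Itô's formula; the real work is analytic. The two delicate points are the reduction to smooth $\phi$, and --- if one prefers to argue directly for general measurable $\phi$ bounded above --- the construction of the optimal drift, whose SDE becomes singular as $t\to1$, together with the verification that $u^\ast$ has finite energy. For the latter I would exploit $F\le\sup\phi$: localising the identity $F(t,W_t)-F(0,0)=\tfrac12\int_0^t|u^\ast_s|^2\,ds+\int_0^t u^\ast_s\cdot dB_s$ bounds $\tfrac12\E\int_0^1|u^\ast_t|^2\,dt$ by $\sup\phi-\log\int e^\phi\,d\gamma_n<+\infty$. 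I expect this rigour at the terminal time --- making the maximiser legitimate without first smoothing $\phi$ --- to be the main obstacle; the inequality ``$\le$'' is comparatively soft.
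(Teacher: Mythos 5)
The paper does not prove this lemma: it cites Borell~\cite{borell} for the formula and~\cite{lehecAIHP} for a dual version. Your proof is the Hamilton--Jacobi--Bellman/verification argument, which is in fact exactly the route Borell himself takes, so in spirit you are reproducing the cited proof rather than departing from it. The core identity $u\cdot\nabla F-\tfrac12|\nabla F|^2=\tfrac12|u|^2-\tfrac12|u-\nabla F|^2$, the HJB equation for $F=\log H$ with $H$ solving the backward heat equation, the gradient bound $\|\nabla F\|_\infty\le\|\nabla\phi\|_\infty$, and the Girsanov--Jensen shortcut for the easy inequality are all correct.

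One point you wave at too quickly is the reduction to smooth $\phi$ with bounded gradient ``by routine mollification-and-truncation together with dominated convergence on both sides.'' The left side passes to the limit by dominated convergence, and the inequality $\log\int\e^\phi\,d\gamma_n\ge\E[\phi(X_1)-\tfrac12\int|u|^2]$ holds for every drift and every measurable $\phi$ bounded above directly from Girsanov and Jensen (no smoothing needed). The problem is the other inequality: knowing that $\sup_u\E[\phi_k(\cdot)-\tfrac12\int|u|^2]=\log\int\e^{\phi_k}\,d\gamma_n\to\log\int\e^\phi\,d\gamma_n$ does not by itself give $\sup_u\E[\phi(\cdot)-\tfrac12\int|u|^2]\ge\log\int\e^\phi\,d\gamma_n$, because the near-optimal drift $u_k$ for $\phi_k$ produces a terminal point $X_1^{(k)}$ whose law depends on $k$, so there is no dominated-convergence argument to swap $\phi_k$ for $\phi$ under the expectation. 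One fix is to approximate $\phi$ monotonically from below (possible for lower semicontinuous $\phi$, after which a measure-theoretic argument handles the general Borel case); the cleaner fix is precisely the direct construction you sketch in your last paragraph. There, after establishing the energy bound $\tfrac12\E\int_0^1|u^*_t|^2\,dt\le\sup\phi-\log\int\e^\phi\,d\gamma_n$ by localization, the identification $\E[\phi(W_1)]-\log\int\e^\phi\,d\gamma_n=\tfrac12\E\int_0^1|u^*_t|^2\,dt$ is best closed not by trying to pass $\E[F(t,W_t)]\to\E[\phi(W_1)]$ directly (delicate for merely measurable $\phi$) but by observing that the Girsanov density of the law of $W$ against that of $B$ is exactly $\e^{F(1,B_1)-F(0,0)}=\e^{\phi(B_1)}/\int\e^\phi\,d\gamma_n$, which follows from applying It\^o to $F(t,B_t)$ and the HJB equation; this identifies the law of $W_1$ and makes the bookkeeping finite-dimensional. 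So the main obstacle you flag is genuine, and your sketch does contain the right tool to overcome it.
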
 
In~\cite{borell} Borell proves this formula and gives 
a nice application to the Pr\'ekopa-Leindler 
inequality, to which we shall come back below. A dual version 
is given in our previous work~\cite{lehecAIHP} together with further applications to functional inequalities. A byproduct of the later version is the following 
additional information on the optimal drift, which we will need later on. 
\begin{lemma}\label{lem_opt}
The supremum in Borell's formula is attained. Moreover the optimal drift $(u_t)$ 
has constant expectation, equal to the barycenter of $\e^\phi$: 
\[ 
\E u_t = \frac{\int_{\R^n} x \e^{\phi(x)} \, \gamma_n (dx) }{\int_{\R^n} \e^{\phi(x)} \, \gamma_n(dx) } , \quad \forall t \in [0,1] . 
\]
\end{lemma}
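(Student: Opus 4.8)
The plan is to recognize Borell's formula as the dual (in the sense of the Gibbs variational principle / entropy duality) of a stochastic control problem, and to extract the optimizer from that picture. Write $Z = \int e^\phi\, d\gamma_n$. By Girsanov's theorem, a drift $(u_t)$ gives rise to a probability measure $\mathbb Q$ on path space under which $t\mapsto B_t - \int_0^t u_s\, ds$ is a Brownian motion; the relative entropy of $\mathbb Q$ with respect to Wiener measure $\mathbb P$ is exactly $\frac12 \E_{\mathbb Q}\int_0^1 |u_t|^2\, dt$ (after re-indexing the drift along $\mathbb Q$). Thus the functional being maximized is $\E_{\mathbb Q}[\phi(B_1)] - H(\mathbb Q\,\|\,\mathbb P)$, and the Gibbs principle says this is maximized, with value $\log Z$, precisely at the measure $\mathbb Q^\star$ whose density against $\mathbb P$ is $e^{\phi(B_1)}/Z$. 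This already shows the supremum is attained; the content of the lemma is then to identify the drift of $B$ under $\mathbb Q^\star$ and compute its expectation.

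First I would describe the optimal drift explicitly. Under $\mathbb Q^\star$, the law of $B_1$ has density proportional to $e^{\phi}$ against $\gamma_n$; more precisely, conditionally on the path up to time $t$, one is performing a Brownian bridge-type tilting, and the standard computation (Föllmer drift / time-reversal of the heat flow) gives
\[
u_t = \nabla \log P_{1-t}\big(e^\phi\big)(B_t + \textstyle\int_0^t u_s\, ds),
\]
where $(P_s)_{s\ge 0}$ is the heat semigroup, $P_s h(x) = \int h(x + \sqrt s\, z)\, \gamma_n(dz)$. Equivalently, writing $X_t = B_t + \int_0^t u_s\, ds$ for the tilted process, $X$ solves $dX_t = \nabla\log P_{1-t}(e^\phi)(X_t)\, dt + dB_t$ with $X_0 = 0$, and $X_1$ has law $e^\phi\, \gamma_n / Z$. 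That this choice achieves equality in Borell's formula is exactly the equality case of the chain rule for entropy along this SDE, which is the computation underpinning the formula in the first place.

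The heart of the matter is the statement $\E u_t = \text{bar}(e^\phi\gamma_n)$ for \emph{every} $t$, not just on average. Here is where I would use the martingale structure: consider the $\mathbb Q^\star$-martingale $M_t = \E_{\mathbb Q^\star}[X_1 \mid \mathcal F_t]$. Since $X_1$ has barycenter $b := \int x\, e^{\phi(x)}\gamma_n(dx)/Z$, we get $\E_{\mathbb Q^\star} M_t = b$ for all $t$, and in particular $M_0 = b$. The key claim is that $M_t = X_t + \int_t^1 \E_{\mathbb Q^\star}[u_s\mid\mathcal F_t]\, ds$ and that in fact $dX_t = u_t\, dt + d\beta_t$ with $\beta$ a $\mathbb Q^\star$-Brownian motion, so that $X_t + \int_t^1 u_s\, ds$ differs from $M_t$ by a martingale increment with zero conditional expectation; taking expectations, $\E u_t$ must be constant in $t$. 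Concretely: $\E_{\mathbb Q^\star}[X_1] = \E_{\mathbb Q^\star}[X_t] + \int_t^1 \E_{\mathbb Q^\star}[u_s]\, ds$ and also $= \E_{\mathbb Q^\star}[X_t] + \E_{\mathbb Q^\star}[u_t]\cdot 0 + \dots$ — the cleanest route is to differentiate $\E_{\mathbb Q^\star}[X_t] = b - \int_t^1 \E u_s\, ds$ in $t$, giving $\frac{d}{dt}\E X_t = \E u_t$, and separately to note that $X_t = B_t^{\mathbb Q^\star} + \int_0^t u_s\, ds$ with $B^{\mathbb Q^\star}$ a $\mathbb Q^\star$-Brownian motion forces $\E_{\mathbb Q^\star} X_t = \int_0^t \E u_s\, ds$, hence $\E X_t$ is an antiderivative of $\E u_t$ that is also affine (being $b - \int_t^1 \E u_s\,ds$ with the constraint at both endpoints $\E X_0 = 0$, $\E X_1 = b$); pinning down that $\E u_s$ is constant then requires knowing $\E X_t$ is \emph{linear}, i.e. $\E X_t = tb$.

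The main obstacle, then, is establishing the linearity $\E_{\mathbb Q^\star}[X_t] = t\, b$. I expect the right tool is a self-similarity / reversibility property: the tilted process $X$ is the $h$-transform of Brownian motion conditioned so that $X_1 \sim e^\phi\gamma_n/Z$, and for any such bridge-like construction the \emph{mean} path interpolates linearly between $\E X_0 = 0$ and $\E X_1 = b$ — this is because $X_t$ can be represented as $\E[X_1 \mid \mathcal F_t]$ minus an independent mean-zero Gaussian-type correction of variance $(1-t)\id$ only in the case $\phi \equiv 0$; in general one instead argues that $M_t - X_t$ is a continuous $\mathbb Q^\star$-martingale started at $b$ and with the property $\E(M_t - X_t) = b - \E X_t$ constant would be circular, so the honest argument is: $u_t = \E_{\mathbb Q^\star}[u_t \mid \mathcal F_0] + (\text{martingale part})$ is not available, and instead one writes $b = \E_{\mathbb Q^\star} X_1 = \E_{\mathbb Q^\star}\big[\E_{\mathbb Q^\star}[X_1\mid \mathcal F_t]\big]$ and uses that $\E_{\mathbb Q^\star}[X_1 \mid \mathcal F_t] = X_t + (1-t)\E_{\mathbb Q^\star}[u_t' \mid \mathcal F_t]$ only when drifts are constant — so the cleanest correct proof simply computes $\frac{d}{dt}\E_{\mathbb Q^\star}[X_t] = \E_{\mathbb Q^\star}[u_t]$ and $\frac{d}{dt}\E_{\mathbb Q^\star}[u_t] = 0$ directly, the latter because $u_t = \nabla\log P_{1-t}(e^\phi)(X_t)$ and Itô's formula applied to this (using that $\partial_s + \frac12\Delta$ kills $\log$... no: using $\partial_t P_{1-t}(e^\phi) = -\frac12\Delta P_{1-t}(e^\phi)$ and the SDE for $X$) shows $du_t$ has zero drift, i.e. $(u_t)$ is itself a $\mathbb Q^\star$-martingale. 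That last observation — $(u_t)_{t\in[0,1)}$ is a martingale under $\mathbb Q^\star$ — is the crisp fact that delivers the lemma: a martingale has constant expectation, $\E u_0 = \nabla \log P_1(e^\phi)(0) = \int x\, e^{\phi(x)}\gamma_n(dx) / Z = b$, and continuity up to $t=1$ handles the endpoint. So the plan reduces to: (i) invoke Girsanov + Gibbs to get the optimizer $\mathbb Q^\star$ and the formula $u_t = \nabla\log P_{1-t}(e^\phi)(X_t)$; (ii) apply Itô to see $(u_t)$ is a $\mathbb Q^\star$-martingale; (iii) evaluate $\E u_0$. The one genuinely delicate point is integrability near $t=1$ (the drift $\nabla\log P_{1-t}(e^\phi)$ can blow up), which is why the lemma is quoted from \cite{lehecAIHP} rather than reproved here, and where one would need the hypothesis that $\phi$ is bounded above together with a truncation argument.
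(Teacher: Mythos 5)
The paper itself does not prove Lemma~\ref{lem_opt}; it is quoted as a byproduct of the representation formula established in \cite{lehecAIHP}, so there is no in-paper proof to compare against. Your proposal, after a somewhat meandering middle section in which you entertain and discard several circular or incorrect lines, does eventually land on the correct and standard argument, which is indeed the one underlying the cited reference: via Girsanov and the Gibbs variational principle the optimizer is the measure $\mathbb Q^\star$ with density $\e^{\phi(B_1)}/Z$ on path space, the optimal drift is the F\"ollmer drift $u_t = \nabla\log P_{1-t}(\e^\phi)(X_t)$, and the crisp reason $\E u_t$ is constant is that $(u_t)$ is a $\mathbb Q^\star$-martingale. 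Your verification of the martingale property is correct: writing $g = \log P_{1-t}(\e^\phi)$, the backward heat equation gives $\partial_t g + \tfrac12\Delta g + \tfrac12|\nabla g|^2 = 0$, whose gradient $\partial_t\nabla g + \tfrac12\Delta\nabla g + (\nabla^2 g)\nabla g = 0$ is precisely the vanishing of the It\^o drift of $\nabla g(t,X_t)$ along $dX_t = \nabla g(t,X_t)\,dt + d\beta_t$. The evaluation $\E u_0 = \nabla\log P_1(\e^\phi)(0) = \int x\,\e^\phi\,d\gamma_n / \int \e^\phi\,d\gamma_n$ is also correct.

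Two points deserve care. First, you correctly flag that integrability of $u_t$ near $t=1$ (where $\nabla\log P_{1-t}(\e^\phi)$ may blow up) is the genuine technical obstacle, and that one needs $\phi$ bounded above plus a truncation argument to upgrade the local martingale to a true martingale with constant mean up to $t=1$; this is exactly the kind of detail handled in \cite{lehecAIHP} and not something your sketch closes. Second, the long paragraph beginning ``The main obstacle, then, is establishing the linearity $\E_{\mathbb Q^\star}[X_t] = t\,b$\dots'' contains several false starts explicitly acknowledged as circular or unavailable before you arrive at the martingale observation; as a proof plan this is harmless, but in a written proof one should delete these detours and go directly to the It\^o computation, since the linearity of $t \mapsto \E X_t$ is a consequence of $\E u_t$ being constant, not an independent input.
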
 
\section{Property $(\tau)$}
Property~$(\tau)$ is a concentration inequality introduced by Maurey in~\cite{maurey}.  
It was mainly motivated by some delicate isoperimetric inequality 
for the product of symmetric exponential measures in dimension $n$, 
but here we only need the Gaussian version 
of the inequality, which states as follows. 
\begin{lemma}[Property~$(\tau)$ for the Gaussian measure] 
Suppose $\phi,\psi \colon \R^n \to \R$ satisfy 
\begin{equation}\label{eq_prop_tau_hyp}
\phi (x) + \psi (y) \leq \frac 14 \vert x-y\vert^2, \quad \forall x,y\in \R^n. 
\end{equation} 
Then  
\begin{equation}\label{eq_prop_tau_conc}
\left( \int_{\R^n} \e^\phi \, d\gamma_n \right) 
\left( \int_{\R^n} \e^\psi \, d\gamma_n \right) 
\leq 1. 
\end{equation}
\end{lemma}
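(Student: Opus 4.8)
The plan is to obtain property~$(\tau)$ as a direct consequence of Borell's formula, applied twice with respect to one and the same Brownian motion. Before doing so I would reduce to the case where $\phi$ and $\psi$ are bounded from above, which is what Borell's formula requires: replacing $\phi$ by $\phi\wedge M$ and $\psi$ by $\psi\wedge M$ only decreases the left-hand side of~\eqref{eq_prop_tau_hyp}, so the hypothesis is preserved, and it is enough to prove~\eqref{eq_prop_tau_conc} for the truncations and then let $M\to+\infty$ by monotone convergence in each factor. (There is no issue in forming the product: taking $y=0$ in~\eqref{eq_prop_tau_hyp} gives $\phi(x)\leq\tfrac14|x|^2-\psi(0)$, whence $\int_{\R^n}\e^\phi\,d\gamma_n<+\infty$, and symmetrically for $\psi$.)

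Assume then that $\phi,\psi$ are bounded from above, fix a standard Brownian motion $(B_t)_{t\in[0,1]}$, and write $X_u=B_1+\int_0^1 u_t\,dt$ for a drift $(u_t)$. Applying Borell's formula to $\phi$ and to $\psi$ with respect to this \emph{same} $(B_t)$, and combining the two independent suprema into one supremum over pairs of drifts, I get
\[
\log\Bigl(\int_{\R^n}\e^\phi\,d\gamma_n\Bigr)+\log\Bigl(\int_{\R^n}\e^\psi\,d\gamma_n\Bigr)
=\sup_{u,v}\ \E\Bigl[\,\phi(X_u)+\psi(X_v)-\tfrac12\int_0^1\bigl(|u_t|^2+|v_t|^2\bigr)\,dt\,\Bigr].
\]
For a fixed pair of drifts $(u,v)$ I would apply~\eqref{eq_prop_tau_hyp} at the points $x=X_u$ and $y=X_v$: since $X_u-X_v=\int_0^1(u_t-v_t)\,dt$, this yields $\phi(X_u)+\psi(X_v)\leq\tfrac14\bigl|\int_0^1(u_t-v_t)\,dt\bigr|^2$, and by Jensen's inequality the right-hand side is at most $\tfrac14\int_0^1|u_t-v_t|^2\,dt$.

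It then remains to observe the elementary pointwise identity $\tfrac14|u-v|^2-\tfrac12|u|^2-\tfrac12|v|^2=-\tfrac14|u+v|^2\leq0$; plugging it in shows that the quantity inside the expectation is $\leq 0$ for every pair of drifts, hence $\log(\int_{\R^n}\e^\phi\,d\gamma_n)+\log(\int_{\R^n}\e^\psi\,d\gamma_n)\leq 0$, which is~\eqref{eq_prop_tau_conc}. Note that the refined information on the optimal drift in Lemma~\ref{lem_opt} is not needed here. The argument is short once it is set up correctly; the point that really has to be got right is the coupling — using the same Brownian motion for both functions so that the $B_1$ terms cancel in $X_u-X_v$ — while the only technical nuisance is the truncation step. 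The fact that $\tfrac14$ is precisely the constant making $\tfrac14|u-v|^2-\tfrac12|u|^2-\tfrac12|v|^2$ nonpositive is also what makes it the sharp constant in~\eqref{eq_prop_tau_hyp}.
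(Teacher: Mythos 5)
Your proof is correct and follows essentially the same route as the paper: Borell's formula applied twice with the same Brownian motion, the hypothesis at the drift endpoints, Jensen/Cauchy--Schwarz to pass to $\tfrac14\int_0^1|u_t-v_t|^2\,dt$, and then the elementary bound $\tfrac14|u-v|^2\leq\tfrac12|u|^2+\tfrac12|v|^2$ (you prove it by the identity $-\tfrac14|u+v|^2$, the paper invokes convexity of $|\cdot|^2$, which is the same thing). Your extra care about the truncation and finiteness of the factors is a welcome, if minor, addition.
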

\begin{proof}
Let us repeat Borell's proof of the Prékopa-Leindler inequality,
of which the Gaussian property $(\tau)$ is a particular case.  
By monotone convergence it is 
certainly enough to prove the result for functions $\phi$ and $\psi$
which are bounded from above. 
Let $(B_t)$ be a standard Brownian motion on $\R^n$ and let $(u_t)$
and $(v_t)$ be two drifts. Using the hypothesis, 
the Cauchy-Schwarz inequality, and the convexity of the 
Euclidean norm squared we get
\[ 
\begin{split} 
\phi \left( B_1 + \int_0^1 u_t \, dt \right) 
+  \psi \left( B_1 + \int_0^1  v_t \, dt \right) 
 & \leq \frac 14 \left\vert \int_0^1 u_t-v_t \, dt  \right\vert^2\\
 & \leq \frac 14  \int_0^1 \vert u_t-v_t \vert^2  \, dt \\
 & \leq \frac 12 \int_0^1 \vert u_t\vert^2 \, dt + \frac 12 \int_0^1 \vert v_t\vert^2 \,dt. 
\end{split} 
\] 
Taking expectation and then the supremum 
over $(u_t)$ and $(v_t)$ yields the 
desired inequality by Borell's formula. 
\end{proof} 
The constant $1/4$ in the hypothesis~\eqref{eq_prop_tau_hyp} 
is largest possible. This can be seen by taking a linear function for 
$\phi$ and an appropriate quadratic function for 
$\psi$. However, as pointed out in~\cite{AKM}, 
the functional Santal\'o inequality amounts to saying that 
under an additional centering condition, 
this constant can be replaced by $1/2$.
More precisely setting $f(x) = \e^{\phi(x) + \vert x\vert^2/ 2}$ 
in the functional Santal\'o inequality 
(and similarly for $g$) leads to 
the following statement. 
\begin{theorem}[Reformulation of the functional Santal\'o inequality]
Suppose $\phi,\psi \colon \R^n \to \R$ satisfy 
\begin{equation}\label{eq_hypopo}
\phi (x) + \psi (y) \leq \frac 12  \vert x-y\vert^2, \quad \forall x,y\in \R^n 
\end{equation}
and assume additionally that 
\[ 
\int_{\R^n} x \e^{\phi(x) }\, \gamma_n (dx) = 0 . 
\] 
Then the inequality~\eqref{eq_prop_tau_conc} holds true. 
\end{theorem}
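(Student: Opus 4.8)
The plan is to follow Borell's proof of property~$(\tau)$, with two modifications: I will use the \emph{optimal} drift for $\phi$ in Borell's formula — so that Lemma~\ref{lem_opt} together with the centering assumption forces its mean to vanish — and I will run the Borell representation of $\int\e^\psi\,d\gamma_n$ along a \emph{time-reversed} Brownian motion.

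By monotone convergence it is enough to treat $\phi,\psi$ bounded from above. Fix a standard Brownian motion $(B_t)_{t\in[0,1]}$, let $(u_t)$ be the optimal drift for $\phi$ (it exists by Lemma~\ref{lem_opt}) and set $\bar U=\int_0^1 u_t\,dt$. By Lemma~\ref{lem_opt} the expectation $\E u_t$ is constant and equals the barycenter of $\e^\phi\gamma_n$, which vanishes by assumption; hence $\E\bar U=0$, and
\[
\log\int_{\R^n}\e^\phi\,d\gamma_n=\E\bigl[\phi(B_1+\bar U)\bigr]-\tfrac12\,\E\int_0^1|u_t|^2\,dt .
\]
Now set $\hat B_t:=B_1-B_{1-t}$, which is again a standard Brownian motion, with $\hat B_1=B_1$ and with the property that the past of $\hat B$ up to time $s$ is independent of the past of $B$ up to time $1-s$. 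Let $(v_t)$ be the optimal drift for $\psi$ relative to $\hat B$ and $\bar V=\int_0^1 v_t\,dt$; Borell's formula gives
\[
\log\int_{\R^n}\e^\psi\,d\gamma_n=\E\bigl[\psi(B_1+\bar V)\bigr]-\tfrac12\,\E\int_0^1|v_t|^2\,dt .
\]
Adding the two identities, writing $W=B_1+\bar U$ and $Y=B_1+\bar V$ so that $W-Y=\bar U-\bar V$, and using $\phi(W)+\psi(Y)\le\frac12|W-Y|^2$ from~\eqref{eq_hypopo}, the conclusion~\eqref{eq_prop_tau_conc} reduces to the transport-type estimate
\[
\E\,|\bar U-\bar V|^2\;\le\;\E\int_0^1|u_t|^2\,dt+\E\int_0^1|v_t|^2\,dt .
\]

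This estimate is the heart of the matter and the step I expect to cost the most work. The crude bound $\E|\bar U-\bar V|^2\le\E\int_0^1|u_t-v_t|^2\,dt\le\E\int_0^1(2|u_t|^2+2|v_t|^2)\,dt$ loses a factor $2$; that is precisely property~$(\tau)$, and it uses neither the centering nor the time reversal. To get the sharp constant I would expand $\E|\bar U-\bar V|^2=\E|\bar U|^2+\E|\bar V|^2-2\,\E[\bar U\cdot\bar V]$ and use $\E|\bar U|^2\le\E\int_0^1|u_t|^2\,dt$ and $\E|\bar V|^2\le\E\int_0^1|v_t|^2\,dt$ (Jensen again), so that everything reduces to controlling the cross term $-2\,\E[\bar U\cdot\bar V]$ — which need not be negative — by the two Cauchy--Schwarz deficits $\E\int_0^1|u_t|^2\,dt-\E|\bar U|^2$ and $\E\int_0^1|v_t|^2\,dt-\E|\bar V|^2$. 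This is where both modifications should pay off: the optimal drifts are $L^2$-martingales, $(u_t)$ for the filtration of $B$ started at $0$ and $(v_t)$ for the filtration of $\hat B$, so that $\bar U$ and $\bar V$ are, heuristically, functions of the early and of the late increments of $B$ respectively; writing them as stochastic integrals against $dB$ and $d\hat B$ with weight $1-t$ and applying It\^o's isometry, the required inequality should become an explicit estimate between second moments, one that is tight for Gaussian data. Establishing this — and checking the integrability that makes $(u_t),(v_t)$ genuine $L^2$-martingales — is the main obstacle.
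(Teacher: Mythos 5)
You correctly assemble the two key ingredients from the paper — running the \emph{optimal} drift $(u_t)$ for $\phi$ so that Lemma~\ref{lem_opt} plus the centering assumption give $\E u_t=0$ for every $t$, and driving the Borell representation of $\psi$ by the time reversal $\hat B_t=B_1-B_{1-t}$ — and you correctly isolate the estimate $\E|\bar U-\bar V|^2\le\E\int_0^1|u_t|^2\,dt+\E\int_0^1|v_t|^2\,dt$ as the crux. But the route you sketch for this estimate is a dead end, and you miss the one short step that actually closes it.

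Your plan is to expand $\E|\bar U-\bar V|^2=\E|\bar U|^2+\E|\bar V|^2-2\,\E[\bar U\cdot\bar V]$, absorb the squares by Jensen, and then somehow kill the cross term $\E[\bar U\cdot\bar V]$. This does not work: $\E[\bar U\cdot\bar V]=\int_0^1\int_0^1\E\langle u_t,\hat v_s\rangle\,dt\,ds$, and $u_t$ (which is $\mathcal F_t$-measurable) and $\hat v_s$ (which is $\hat{\mathcal F}_s$-measurable) are independent only when $t+s\le 1$; for $t+s>1$ the term $\E\langle u_t,\hat v_s\rangle$ has no reason to vanish, so neither does the double integral. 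The martingale structure and It\^o isometry you invoke as a possible rescue are not needed and do not give a clean fix here. The missing move is to apply Jensen/Cauchy--Schwarz \emph{before} expanding, after the change of variables $\int_0^1\hat v_s\,ds=\int_0^1\hat v_{1-t}\,dt$ that pairs time $t$ with time $1-t$:
\begin{equation*}
\Bigl|\int_0^1(u_t-\hat v_{1-t})\,dt\Bigr|^2\le\int_0^1|u_t-\hat v_{1-t}|^2\,dt .
\end{equation*}
Now the cross term to control is $\E\langle u_t,\hat v_{1-t}\rangle$ \emph{pointwise in $t$}, and for every $t$ the $\sigma$-fields $\mathcal F_t$ and $\hat{\mathcal F}_{1-t}$ \emph{are} independent, so $\E\langle u_t,\hat v_{1-t}\rangle=\langle\E u_t,\E\hat v_{1-t}\rangle=0$ by the centering of $(u_t)$. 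Taking expectation then gives exactly $\E\int_0^1|u_t|^2\,dt+\E\int_0^1|\hat v_t|^2\,dt$ (after the change of variables $t\mapsto 1-t$ in the second integral), and the theorem follows by taking the supremum over $(\hat v_t)$ in Borell's formula for $\psi$. In short: you have the right setup, but the paper's trick is the $t\leftrightarrow 1-t$ pairing \emph{inside} the time integral, not a global expansion of $\E|\bar U-\bar V|^2$, and without it your argument does not close.
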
 
In~\cite{lehecAFST} a direct proof of this inequality is given under the stronger 
assumption that $\phi$ is even, using the Poincar\'e 
inequality for even functions in Gauss space and some symmetrization argument. Property~$(\tau)$ admits 
a dual formulation in terms of transport/entropy inequality. As a result
the functional Santal\'o inequality can be formulated as an improved
transportation inequality for measures satisfying a certain 
centering condition. We refer to~\cite{fathi} 
for the details. This dual version is the approach taken by Courtade, Fathi and 
Mikulincer. Here we stick to the direct version, and we show that the stochastic method of Borell allows to recover the full statement quite easily. 
\section{Stochastic proof of Santal\'o}
We prove the second formulation, in terms of property $(\tau)$. 
We need to leverage the barycenter assumption so as to gain a 
factor $1/2$ in Borell's argument. 
Let $(u_t)$ be the optimal drift in Borell's formula applied to 
the first function $\phi$, and with $(B_t)$ as driving Brownian motion. 
By Lemma~\ref{lem_opt} and the barycenter assumption we have $\E u_t = 0$ 
for all $t\in [0,1]$. 
The main trick, which we borrow from the Courtade, Fathi, Mikulincer 
paper mentioned above, is to use a different Brownian motion for the 
second function, namely the process  
$(\hat B_t)$ given by $\hat B_t = B_1 - B_{1-t}$
for all $t\in[0,1]$. Observe that $(\hat B_t)$ is also 
a standard Brownian motion, and that $\hat B_1 = B_1$. 
Now let $(\hat v_t)$ be any drift with respect the reversed 
process $(\hat B_t)$. Since $\hat B_1 = B_1$ 
we get from the hypothesis (\ref{eq_hypopo})
\begin{equation}\label{eq_df}
\phi \left( B_1 + \int_0^1 u_t \, dt \right) 
+ \psi \left( \hat B_1 + \int_0^1 \hat  v_t \, dt \right) 
\leq \frac 12 \left\vert \int_0^1 u_t \, dt - \int_0^1 \hat v_t \, dt \right\vert^2  
\end{equation} 
Reversing time in the last integral and applying Cauchy-Schwarz we obtain
\begin{equation} \label{eq_tre}
\left\vert \int_0^1 u_t \, dt - \int_0^1 \hat v_t \, dt \right\vert^2 
= \left\vert \int_0^1 u_t \, dt - \int_0^1 \hat v_{1-t} \, dt \right\vert^2 
\leq  \int_0^1 \vert u_t - \hat v_{1-t} \vert^2 \, dt. 
\end{equation} 
Let $(\mathcal F_t)$ and $(\hat{\mathcal F}_{t})$ be the 
natural filtrations of $(B_t)$ and $(\hat B_t)$ respectively. 
From the independence of the Brownian 
increments it is easily seen that for any fixed $t\in[0,1]$ 
the $\sigma$-fields $\mathcal F_t$ and 
$\hat{\mathcal F}_{1-t}$ are independent. 
As a result $u_t$ and $\hat v_{1-t}$ are independent. 
Since the drift $(u_t)$ has expectation $0$ 
for all time we obtain 
\begin{equation}\label{eq_iii}
\E \langle u_t , \hat v_{1-t} \rangle = 
\langle \E u_t , \E \hat v_{1-t}\rangle = 0 , \quad \forall t \in [0,1]  .
\end{equation} 
Combining together (\ref{eq_df}), (\ref{eq_tre}) and (\ref{eq_iii}) yields 
\[ 
\E \left[ \phi \left( B_1 + \int_0^1 u_t \, dt \right) 
+ \psi \left( \hat B_1 + \int_0^1 \hat  v_t \, dt \right) \right]
\leq \E \left[ \frac 12 \int_0^1 \vert u_t\vert^2 \, dt 
+ \frac 12 \int_0^1 \vert \hat v_t\vert^2 \, dt \right] . 
\] 
Recalling that $(u_t)$ is the optimal drift for $\phi$, and 
taking the supremum in $(\hat v_t)$ yields the result.  

\section{Concluding remarks} 

In the aforementioned article \cite{NT}, Nakamura and Tsuji obtain 
the functional Santal\'o inequality as a limit 
case of a certain family of functional inequalities. 
More precisely they establish
an improved form of the reversed hypercontractivity inequality of 
Borell \cite{borell_rev} for even functions. Moreover, they pushed 
this line of work further in \cite{NT2}, in which they show 
that functions satisfying suitable centering conditions satisfy 
stronger forms of the reversed Brascamp-Lieb inequalities 
(of which reversed hypercontractivity is a particular case)
than generic functions. See \cite{barthe_wolff} and the references 
therein for the background on reversed Brascamp-Lieb inequalities.   
Besides, it was shown by E. Milman \cite{milmanGC} 
that these improved reversed Brascamp-Lieb inequalities contain the celebrated Gaussian correlation inequality of Royen \cite{royen} as a special case. 

The Brascamp-Lieb inequalities and their reversed forms are known to be amenable 
to the stochastic approach initiated by Borell, see \cite{lehecBL}. It is 
natural to ask whether the centered versions discovered by Nakamura and 
Tsuji could also be proven this way.  
Unfortunately, we were unable to complete this task, and 
we leave it as an open problem.

\end{document}